\def\@tocline#1#2#3#4#5#6#7{\relax
	\ifnum #1>\c@tocdepth 
	\else
	\par \addpenalty\@secpenalty\addvspace{#2}%
	\begingroup \hyphenpenalty\@M
	\@ifempty{#4}{%
		\@tempdima\csname r@tocindent\number#1\endcsname\relax
	}{%
		\@tempdima#4\relax
	}%
	\parindent\z@ \leftskip#3\relax \advance\leftskip\@tempdima\relax
	\rightskip\@pnumwidth plus4em \parfillskip-\@pnumwidth
	#5\leavevmode\hskip-\@tempdima
	\ifcase #1
	\or\or \hskip 2em \or \hskip 2em \else \hskip 3em \fi%
	#6\nobreak\relax
	\dotfill\hbox to\@pnumwidth{\@tocpagenum{#7}}\par
	\nobreak
	\endgroup
	\fi}
\newtheorem{intro-thm}{Theorem}[]
\theoremstyle{plain}
\newtheorem{thm}{Theorem}[section]
\newtheorem{theorem}[thm]{Theorem}
\newtheorem{conjecture}[thm]{Conjecture}
\theoremstyle{definition}
\renewcommand{\P}{{\mathbb P}}
\newcommand{\Br}{\text{Br}}
\begin{document}
	\title{A Geometric Perspective on Amitsur's Conjecture}
	
    \author[D. C. R]{Divyasree C Ramachandran} \address{Department of Mathematics, IISER Pune, Dr Homi Bhabha Rd, Pashan, Pune, 411008, India}\email{crdivya99@gmail.com}

	\begin{abstract}
Roqutte proved Amitsur's conjecture for Severi-Brauer varieties associated with cyclic algebras using algebraic methods. We present a geometric proof of Roquette's result by providing simple, explicit birational isomorphisms.
	\end{abstract}
    
\maketitle

\section{Introduction}
Severi-Brauer varieties occupy a central place in the interplay between algebra and geometry. They provide geometric realizations of central simple algebras, and their function fields serve as splitting fields for these algebras. Their birational classification, initiated by Amitsur, reveals deep connections between the geometry of these varieties and the structure of the underlying algebras. 
Let \(F\) be a field. Given a central simple \(F\)-algebra \(A\), consider the Severi-Brauer variety associated with it, denoted by \(SB(A)\). In 1955, Amitsur proposed the following conjecture in his ground-breaking paper \cite{amitsur1955generic}. He was the first to emphasize the importance of birational viewpoint on Severi-Brauer varieties in the study of central simple algebras.
   \begin{conjecture}[Amitsur]\label{Amitsur's conjecture}
       Given two central simple algebras \( A \) and \( B \) over a field \( F \), we have \( SB(A) \sim SB(B) \) if and only if the Brauer classes of \( A \) and \( B \), denoted by \( [A] \) and \( [B] \), generate the same cyclic subgroup of the Brauer group \( \text{Br}(F) \).
   \end{conjecture}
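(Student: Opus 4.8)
The plan is to treat the two implications of the stated equivalence separately, since they are of entirely different character: one is a formal consequence of a classical splitting-field computation, while the other carries all of the geometric content.

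I would first dispatch the implication $SB(A)\sim SB(B)\Rightarrow\langle[A]\rangle=\langle[B]\rangle$. Severi--Brauer varieties are smooth and projective, so a birational isomorphism defined over $F$ yields an $F$-isomorphism of function fields $F(SB(A))\cong F(SB(B))$, and in particular equates their dimensions, forcing $\deg A=\deg B$. The essential input is the description of $F(SB(A))$ as a generic splitting field for $A$: the kernel of the restriction map $\Br(F)\to\Br\bigl(F(SB(A))\bigr)$ is exactly the cyclic subgroup $\langle[A]\rangle$, and likewise for $B$. Since the two function fields are $F$-isomorphic, these two kernels agree, so $\langle[A]\rangle=\langle[B]\rangle$. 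This half is unconditional and needs no new geometry.

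The substance lies in the converse $\langle[A]\rangle=\langle[B]\rangle\Rightarrow SB(A)\sim SB(B)$. Assuming the common cyclic subgroup, $A$ and $B$ share a period $e$ and $[B]=r[A]$ with $\gcd(r,e)=1$; matching dimensions (automatic once $A,B$ are taken to be division algebras, since equal cyclic subgroups force equal index) lets me assume $\deg A=\deg B$. The goal is to produce an explicit birational isomorphism between $SB(A)$ and $SB(B)$. The approach I would take is to fix concrete projective models of both varieties coming from linear data attached to the algebras, and to build a rational map reflecting the relation $[B]=r[A]$ in the Brauer group, then verify birationality by exhibiting a rational inverse over a dense open locus where the construction is invertible. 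For \emph{cyclic} algebras this program can be carried out in full: an explicit presentation $A\cong(a,b)_{e}$ supplies a model of $SB(A)$ on which the maps can be written and inverted in closed form. This is Roquette's theorem, and furnishing transparent geometric formulas for it is exactly what the present paper does.

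The main obstacle is this converse in full generality. The construction above is driven by the availability of an explicit, algebra-theoretic presentation of $SB(A)$; such a presentation exists for cyclic algebras but not for an arbitrary central simple algebra, since a \emph{general} division algebra need not be cyclic, and indeed need not even be a crossed product. I therefore expect the decisive difficulty to be either manufacturing the required rational maps intrinsically, without recourse to a cyclic presentation, or reducing the general case to the cyclic one; it is precisely at this point that the known arguments stop and the equivalence remains open beyond special classes. The contribution one can realistically secure here is the complete geometric resolution of the cyclic case.
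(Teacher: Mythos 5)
The statement you were asked to prove is Amitsur's conjecture itself, which is open; the paper accordingly contains no proof of it, and your decision to prove only the forward implication and to flag the converse as open (except in special cases) is the correct reading of the situation rather than a gap. Your forward direction is exactly the argument the paper invokes as an ``immediate corollary'': a birational equivalence of the smooth projective varieties $SB(A)$ and $SB(B)$ gives an $F$-isomorphism of function fields, and Amitsur's theorem identifies $\ker\bigl(\Br\,(F)\to \Br\,(F(SB(A)))\bigr)$ with $\langle [A]\rangle$, forcing $\langle[A]\rangle=\langle[B]\rangle$. One caveat you only half-address: as literally stated, without the hypothesis $\deg A=\deg B$, the converse is false for trivial dimension reasons (take $B=M_2(A)$: same Brauer class, hence the same cyclic subgroup, but $\dim SB(B)=2\deg A-1\neq\deg A-1$). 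So your reduction to division algebras is not a convenience but a necessary normalization implicit in the conjecture, and it deserves to be stated as a hypothesis. Your parenthetical claim that equal cyclic subgroups force equal index is correct: $[B]=r[A]$ gives $\operatorname{ind}(B)\mid\operatorname{ind}(A)$, and symmetrically.

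For the cyclic case, your sketch and the paper's actual argument (its Theorem 3.1, the geometric form of Roquette's theorem) share the goal --- an explicit birational map realizing $[B]=[A^{\otimes\ell}]$ --- but differ in mechanism. You propose working from a symbol presentation $(a,b)_e$, which tacitly presumes roots of unity or at least a symbol model; the paper instead uses Galois descent with no such assumption: it base-changes to a cyclic splitting field $K$, realizes $SB(A)\otimes_F K$ and $SB(A^{\otimes\ell})\otimes_F K$ as $\mathbb{P}^{s-1}_K$ equipped with semilinear actions twisted by the cocycles $\alpha$ and $\alpha^{\ell}$, writes down the monomial map $\Theta\colon [a_0:\dots:a_{s-1}]\mapsto[\beta_0\,a_0a_1\cdots a_{\ell-1}:\dots:\beta_{s-1}\,a_{s-1}a_0\cdots a_{s+\ell-2}]$, and then solves a system of cocycle equations over $F^{\times}$ for the scalars $\beta_i$ that make $\Theta$ equivariant, so that it descends to the Severi--Brauer varieties. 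That equivariance computation is where all the work lies, and it has no counterpart in your sketch; if you intend your proposal to constitute a proof of the cyclic case rather than a pointer to Roquette, you would need to supply the analogue of this bookkeeping (or the linear-series argument of Roquette's original proof, which produces a function-field isomorphism but no explicit map).
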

   He proved one of the implications in the same paper.
\begin{theorem}[Amitsur's theorem]
    Let \(X\) be a Severi-Brauer variety defined over a field \(F\). Then the kernel of the restriction map \(r_X: \Br \, (k) \rightarrow \Br \, (k(X))\) is a cyclic subgroup generated by the class of \([X]\) in \(\Br \, (k)\).
\end{theorem}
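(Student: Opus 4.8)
The plan is to prove the two inclusions $\langle [X] \rangle \subseteq \ker r_X$ and $\ker r_X \subseteq \langle [X] \rangle$ separately, writing $X = SB(A)$ for a central simple $k$-algebra $A$ of degree $n$, so that $[X]=[A]$ and $\dim X = n-1$. The inclusion $\langle [X]\rangle \subseteq \ker r_X$ is the easy half: the generic point furnishes a canonical $k(X)$-rational point of $X_{k(X)}$, and a Severi--Brauer variety that acquires a rational point over an extension $L$ becomes isomorphic to $\mathbb{P}^{n-1}_L$, hence splits $A$ over $L$. Thus $r_X([A]) = [A_{k(X)}] = 0$, and since $\ker r_X$ is a subgroup it contains the whole cyclic group $\langle [A]\rangle = \langle [X]\rangle$.

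For the reverse inclusion I would run the Hochschild--Serre spectral sequence $H^p(G, H^q(X_{k_s}, \mathbb{G}_m)) \Rightarrow H^{p+q}(X, \mathbb{G}_m)$ for $G = \Gal(k_s/k)$. Geometrically $X_{k_s} \cong \mathbb{P}^{n-1}_{k_s}$, so $H^0(X_{k_s}, \mathbb{G}_m) = k_s^\times$, $\operatorname{Pic}(X_{k_s}) = \mathbb{Z}\cdot[\mathcal{O}(1)]$ with trivial $G$-action, and $\Br(X_{k_s}) = \Br(k_s) = 0$. Feeding Hilbert 90 (i.e. $H^1(G,k_s^\times)=0$) into the five-term exact sequence yields
\[
0 \longrightarrow \operatorname{Pic}(X) \longrightarrow \mathbb{Z} \xrightarrow{\ d_2\ } \Br(k) \longrightarrow \Br(X),
\]
where the last arrow is the natural pullback $H^2(G,k_s^\times)=\Br(k)\to\Br(X)$. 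Exactness at $\Br(k)$ forces $\ker\bigl(\Br(k)\to\Br(X)\bigr) = \operatorname{im} d_2 = \langle d_2(1)\rangle$, a cyclic subgroup generated by a single element; this already delivers the cyclicity asserted in the theorem.

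It remains to identify $d_2(1)$ with $[X]$ and to replace $\Br(X)$ by $\Br(k(X))$. For the latter I would invoke purity, in the form of the Auslander--Goldman theorem: since $X$ is smooth and integral, the restriction to the generic point $\Br(X) \hookrightarrow \Br(k(X))$ is injective, so $\ker r_X = \ker\bigl(\Br(k)\to\Br(X)\bigr)$, which is exactly the cyclic group just produced. The crux—and the step I expect to be the main obstacle—is showing $d_2(1)=[A]$ (up to a choice of generator): the transgression $d_2$ sends the geometric hyperplane class $[\mathcal{O}(1)]$ to the obstruction to descending $\mathcal{O}(1)$ from $X_{k_s}$ to $X$, and one must verify that this descent obstruction is precisely the Brauer class $[A]$ defining $X$. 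I would establish this by unwinding the descent $1$-cocycle for $\mathcal{O}(1)$ and matching it against the cocycle presentation of $A$ (equivalently, by citing the standard identification of this connecting map with the class of the Severi--Brauer variety). Once $d_2(1)=[A]=[X]$ is in hand, combining the two inclusions gives $\ker r_X = \langle [X]\rangle$ exactly.
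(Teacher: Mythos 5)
The first thing to note is that the paper does not prove this statement at all: Amitsur's theorem is quoted as background, with the proof attributed to \cite{amitsur1955generic}, so there is no in-paper proof to match your argument against. Judged on its own merits, your proposal is the standard modern cohomological proof (Lichtenbaum's argument, essentially as in Gille--Szamuely \cite{Gille_Szamuely_2006}, Chapter 5), and its outline is correct: Ch\^atelet's theorem handles the easy inclusion via the generic point; Hochschild--Serre with $H^0(X_{k_s},\mathbb{G}_m)=k_s^\times$ (properness and geometric integrality), Hilbert 90, and $\operatorname{Pic}(X_{k_s})\cong\mathbb{Z}$ with trivial Galois action gives $\ker\bigl(\Br\,(k)\to H^2(X,\mathbb{G}_m)\bigr)=\langle d_2(1)\rangle$; and injectivity of $H^2(X,\mathbb{G}_m)\to \Br\,(k(X))$ for regular integral $X$ converts this into a statement about $\ker r_X$. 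This is a genuinely different route from Amitsur's original one, which worked with generic splitting fields and explicit function-field computations rather than \'etale cohomology; the cohomological route buys a cleaner structural statement (the whole exact sequence $0\to\operatorname{Pic}(X)\to\mathbb{Z}\to\Br\,(k)$) at the price of heavier machinery.

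Two caveats. First, you correctly identify the crux, $d_2(1)=\pm[A]$, but only sketch it; this transgression computation (lifting the $\operatorname{PGL}_n$ descent cocycle of $X$ to $\operatorname{GL}_n$ and matching the resulting coboundary with the cocycle presenting $A$) is the genuinely nontrivial step, and as written your proof is incomplete without it or an explicit citation of it. Second, a minor conflation: the five-term sequence lands in $H^2_{\mathrm{et}}(X,\mathbb{G}_m)$, not in the Azumaya group $\Br\,(X)$, so Auslander--Goldman purity is not quite the right tool; you should instead invoke Grothendieck's result that $H^2_{\mathrm{et}}(X,\mathbb{G}_m)\to H^2_{\mathrm{et}}(\operatorname{Spec} k(X),\mathbb{G}_m)=\Br\,(k(X))$ is injective for regular integral schemes, which makes the whole argument run with cohomological Brauer groups throughout. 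Neither issue reflects a wrong idea, but both need to be filled in for a complete proof.
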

As an immediate corollary, we get that if \( SB(A) \) is birational to \( SB(B) \), then \([A]\) and \([B]\) generate the same cyclic subgroup of the Brauer group \( \Br \, (F) \). Conjecture \ref{Amitsur's conjecture} is still open for a central simple algebra of prime power index. Many cases have been solved by Tregub \cite{Tregub}, Roquette \cite{Roquette+1964+207+226}, Amitsur \cite{amitsur1955generic}, Krashen \cite{KRASHEN2008689}, and \cite{semidirect} in the past Most recently, Kollar \cite{kollár2025birationalequivalenceseveribrauervarieties} addressed the case when the index of the algebra is not a prime power. 
In \cite{Roquette+1964+207+226}, Roquette proved the following theorem and extended it to an algebra with a solvable Galois splitting field.
\begin{theorem}[Roquette]\label{Roquette}
    Let \(A\) be a cyclic algebra over \(F\) with degree \(s\). Then, for any integer \(\ell\) coprime to the period of \(A\), the varieties \(SB([A])\) and \(SB([A^{\otimes \ell}])\) are birational.
\end{theorem}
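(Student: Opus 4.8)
The plan is to realize both Severi--Brauer varieties as explicit $K/F$-forms of $\P^{s-1}$ and to exhibit a $\Gal(K/F)$-equivariant monomial (toric) birational self-map of $\P^{s-1}$ carrying one descent datum to the other. First I would reduce to the case where $A=(K/F,\sigma,a)$ is a division cyclic algebra whose degree $s$ equals its period $e$, so that for $\gcd(\ell,e)=1$ the algebra $A^{\otimes\ell}$ is Brauer-equivalent to the cyclic algebra $(K/F,\sigma,a^{\ell})$ of the same degree, again split by the cyclic field $K$. After base change to $K$ both $SB(A)$ and $SB(A^{\otimes\ell})$ become $\P^{s-1}_K$, and their descent data are the semilinear actions $\tilde\sigma_b(x)=P_b\cdot{}^{\sigma}x$ for $b=a$ and $b=a^{\ell}$, where $P_b$ is the companion matrix of $t^{s}-b$. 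Since $a,a^{\ell}\in F^{*}$, the matrix $P_b$ has entries in $F$, so $\sigma$ fixes it and the cocycle condition reduces to $P_b^{\,s}=b\cdot I$, which is scalar and hence trivial in $\mathrm{PGL}_s$; this is what makes $\tilde\sigma_b$ a legitimate descent datum.

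Next I would pass to the dense open torus $(\G_m)^{s-1}\subset\P^{s-1}$ obtained by deleting the coordinate hyperplanes, and transport the problem to the character lattice $M\cong\Z^{s-1}$. On the torus the action $\tilde\sigma_b$ splits as a linear part — the cyclic-shift automorphism $C$ of order $s$, which is independent of $b$ — composed with a translation by a $K$-point $\tau_b$ that records the scaling by $b$; because $P_b^{\,s}=bI$, the class of $\tau_b$ modulo norms from $K$ has order exactly the period $e$. A monomial map attached to a matrix $B\in\mathrm{GL}_{s-1}(\Z)$ is automatically birational on $\P^{s-1}$, so it suffices to produce such a $B$ commuting with $C$ and sending the class of $\tau_a$ to that of $\tau_{a^{\ell}}$, i.e. realizing multiplication by $\ell$ on that class. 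This is precisely where $\gcd(\ell,e)=1$ enters: multiplication by $\ell$ is invertible modulo $e$, which is what allows one to choose $B$ integrally invertible rather than merely invertible over $\Q$. Such a $B$ intertwines $\tilde\sigma_a$ with $\tilde\sigma_{a^{\ell}}$, and the corresponding monomial map therefore descends to an $F$-rational birational isomorphism $SB(A)\dashrightarrow SB(A^{\otimes\ell})$.

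I expect the main obstacle to be promoting this map from a dominant self-map to a genuine birational one while keeping it defined over $F$. The naive candidate — the $\ell$-th power map $\xi\mapsto\xi^{\ell}$ from the norm torsor $\{N_{K/F}(\xi)=a\}$ to $\{N_{K/F}(\xi)=a^{\ell}\}$ — is only an isogeny and is not invertible, so a correct argument must use the coprimality to choose $m$ with $\ell m\equiv 1\pmod e$, write $a^{\ell m}=a\cdot N_{K/F}(c)$, and absorb the norm factor $N_{K/F}(c)$ by the $F$-rational change of coordinates $\xi\mapsto c\,\xi$. Verifying that the two monomial maps built from $\ell$ and from $m$ are mutually inverse as rational maps, and that the construction is compatible with the semilinear descent — in particular that the translation class has order dividing the period $e$ rather than the full degree $s$, so that $\gcd(\ell,e)=1$ genuinely suffices — is the delicate bookkeeping I would have to carry out. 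Granting that, the chain $SB(A)\sim SB(A^{\otimes\ell})$ follows, recovering Roquette's theorem by explicit birational geometry.
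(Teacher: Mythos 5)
Your overall strategy coincides with the paper's, not a different route: the paper also realizes both varieties as $K/F$-forms of $\P_K^{s-1}$ with companion-matrix semilinear descent data (its $\varphi_1$, $\psi_1$ are exactly your $\tilde\sigma_a$, $\tilde\sigma_{a^\ell}$), and intertwines them by a monomial birational map corrected by scalars. Concretely, the matrix $B$ you postulate is (the image in the quotient lattice $\Z^s/\Z(1,\dots,1)$ of) the circulant $I+P+\cdots+P^{\ell-1}$, $P$ the cyclic shift, which is precisely the paper's map $\Theta_1\colon [a_0:\cdots:a_{s-1}]\mapsto [a_0a_1\cdots a_{\ell-1}:a_1\cdots a_\ell:\cdots]$, and your translation correction is the paper's diagonal scaling $\Theta_2$ by $\beta_i\in F^\times$. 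However, two steps in your plan do not hold as stated. First, the reduction ``assume the degree $s$ equals the period $e$'' is false: cyclic division algebras with $e$ a proper divisor of $s$ exist, and replacing $A$ by an algebra of smaller degree changes $SB(A)$ entirely, so no such reduction is available. This matters because unimodularity of $B$ is governed by $s$, not $e$: the eigenvalues of the circulant are $(\omega^\ell-1)/(\omega-1)$ as $\omega$ runs over the nontrivial $s$-th roots of unity, so $B\in \GL_{s-1}(\Z)$ --- equivalently, the monomial map is birational --- precisely when $\gcd(\ell,s)=1$. Your claim that invertibility of $\ell$ modulo $e$ is ``what allows one to choose $B$ integrally invertible'' is therefore wrong as written; the repair is the standard period--index theorem ($e$ and $s$ have the same prime divisors for a division algebra), which converts the hypothesis $\gcd(\ell,e)=1$ into the needed $\gcd(\ell,s)=1$. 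The paper sidesteps this by stating its theorem with ``coprime to the degree.''

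Second, matching the translation classes modulo norms is necessary but not sufficient: writing $\varphi(x)=\tau_a\cdot C({}^{\sigma}x)$ and $\Theta(x)=\beta\cdot B(x)$, the intertwining condition is the twisted coboundary equation $B(\tau_a)\,\tau_{a^\ell}^{-1}=C({}^{\sigma}\beta)\,\beta^{-1}$, and you must actually solve it --- either by a Hilbert 90/Shapiro argument for the quasi-trivial torus $R_{K/F}\G_m/\G_m$, or, as the paper does, by solving the explicit linear system for the $\beta_i\in F^\times$, where the count of occurrences of $\gamma$ in the cocycle (the computation $m=\ell-1$, forcing $k^s=1$) is the real content. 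Deferring this as ``delicate bookkeeping'' leaves out the heart of the proof. Finally, your closing worry about inverting the $\ell$-th power isogeny on the norm torsor, choosing $m$ with $\ell m\equiv 1\pmod e$, and absorbing a norm factor, is a red herring: once $B$ is unimodular the monomial map is birational outright, as the paper verifies directly by inverting $\Theta_1$ on the affine chart $x_0\neq 0$, and no mutually-inverse pair of maps needs to be constructed.
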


Roquette’s proof establishes an isomorphism between the function fields of Severi–Brauer varieties via linear series of divisors, but it does not yield an explicit birational map between the varieties themselves. Our approach, based on the theory of Galois descent, provides a geometric construction. Specifically, we describe the function field of a Severi–Brauer variety associated with a cyclic algebra by base-changing to a cyclic splitting field and equipping the resulting projective space with a Galois semilinear action. We then construct an explicit birational map between the projective spaces endowed with the Galois actions corresponding to \(A\) and \(A^{\ell}\), and show that this map descends to the associated Severi–Brauer varieties. 

\section{Background}\label{background}

\subsection{Crossed Product Algebras} 
Let $G$ be a finite group of automorphisms of a field $K$, and let  \(F = K^G\) be the fixed field of $G$. Thus $K|F$ is a Galois extension with Galois group $G$. We now construct an algebra associated to this data. Consider the right vector space \( (K,G) = \bigoplus_{g \in G} u_g K\) over $K$, where the $u_g$ are basis elements indexed by the elements of $G$.  
An arbitrary element of $(K,G)$ has the form $\sum_{g \in G} u_g a_g$ with $a_g \in K$. Multiplication in this algebra is defined distributively, subject to the rules  
\begin{align*}
a u_g & = u_g g(a), \quad a \in K, \, \text{and} \\
u_g u_h & = u_{gh}\,\alpha(g,h),
\end{align*}
where $\alpha : G \times G \to K^\times$ is a fixed function. Associativity requires that the condition \( (u_g u_h) u_f = u_g (u_h u_f),\) be satisfied for all \(g,h,f \in G \). Using the above relations, this is equivalent to the cocycle condition  
\[
\alpha^f(g,h)\,\alpha(gh,f) \;=\; \alpha(g,hf)\,\alpha(h,f),
\]  
where \(\alpha^f(g,h) = f(\alpha(g,h))\). Hence \(\alpha \in H^2(G, K^{\times})\) is a \(2\)-cocycle. The algebra \((K,G,\alpha)\) defined by such a cocycle is called the crossed product of the field \(K\) and the group \(G\). It is a central simple algebra over the field  
\(F = K^G\) with degree \(|G|= [K:F]\).

    \subsection{Cyclic algebras}
    Cyclic algebras constitute an important subclass of central simple algebras. These are generalizations of quaternion algebras to arbitrary degrees. Let \(K|F\) be a cyclic extension with Galois group \( G = \{1, \sigma, \dots, \sigma^{n-1}\}\), where \(n = [K:F]. 
    \) For an element \(\gamma \in F^\times\), we define the \(F\)-algebra \( (K, G, \gamma) = u_1K \oplus u_{\sigma}K \oplus \cdots \oplus u_{\sigma^{n-1}}K, 
    \) with multiplication given by
\[
u_{\sigma^i} u_{\sigma^j} = 
\begin{cases}
u_{\sigma^{i+j}}, & i+j < n, \\[6pt]
\gamma\, u_{\sigma^{i+j-n}}, & i+j \geq n.
\end{cases}
\]
This algebra \((K,G,\gamma)\) is called a cyclic algebra. It is isomorphic 
to the crossed product of \(K\) and \(G\) relative to a suitable \(2\)-cocycle, namely
\[
\alpha(\sigma^i, \sigma^j) = 
\begin{cases}
1, & i+j < n, \\
\gamma, & i+j \geq n.
\end{cases}
\]
Thus, \((K,G,\gamma)\) is a central simple \(F\)-algebra. Moreover, for an arbitrary crossed 
product \((K,G,\alpha)\) with \(G\) cyclic, one can choose a basis that identifies it 
with a cyclic algebra. We refer the reader to \cite{Bokhut’1991} for further discussion on crossed products and cyclic algebras.

    \subsection{Galois descent} Let \(K|F\) be a Galois extension with group \(G\). The theory of descent gives a one-to-one correspondence between central simple algebras of degree \(n\) split by \(K\) and isomorphism classes of \(K\)-forms of the projective space via the elements of 
    \( H^1\!\left(G,\operatorname{Aut}(M_n(K))\right) =  H^1\!\left(G,\operatorname{PGL}_n(K)\right)\). Hence, every form of projective space is a Severi-Brauer variety. 
    For more details, see \cite{Gille_Szamuely_2006}.

  \section{Amitsur's conjecture for cyclic algebras} \label{SB varieties} 
    \subsection{Function field of Severi-Brauer varieties}\label{Brauer fields}

For a Severi-Brauer variety associated with a cyclic algebra, Roquette constructs an \(m\)-th Brauer field for every multiple \(m\) of the index of the algebra in \cite[Section 4]{Roquette+1964+207+226}. We refine this construction in the special case \(m\) equal the index to give a concrete geometric description of the function field of the Severi–Brauer variety.

Consider a cyclic algebra $A = (K\vert F,G,\alpha)$ of index $s$, where \(\alpha \in H^2(G,K^{\times})\) is a two-cocycle and let $\sigma: K \to K$ be a generator of $G$. Consider the transcendental extension $K(\underline{x}) := K(x_0,x_1,\dots,x_{s-1})$ over $K$ of degree $s$. The automorphism \(\sigma\) induces a $K$-semilinear transformation on  $K(\underline{x})$ given by
\[
\begin{aligned} \label{K-semilinear action}
   \sigma_\alpha :\; K(x_0,x_1,\dots,x_{s-1}) &\longrightarrow K(x_0,x_1,\dots,x_{s-1}), \\
   x_i &\longmapsto x_{(i+1) \, \text{mod} \, s} \alpha(\sigma,\sigma^i), \quad && 0 \leq i \leq s-1, \\
   \lambda &\longmapsto \sigma(\lambda), \quad && \lambda \in K^{*}.
\end{aligned}
\] For each $1\leq i \leq s-1$, let $y_i:= \frac{x_i}{x_0}$ and let $K(\underline{y}) := K(y_1, y_2, \dots, y_{s-1})$ denote the transcendental extension of $K$ generated by free variables $y_i$.
   The isomorphisms $(\sigma^j)_\alpha$ restricted to $K(\underline{y})$ 
   forms a subgroup of the automorphism group of $K(\underline{y})$, say $H$. The fixed field of $K(\underline{y})$ under $H$ is the function field of the Severi-Brauer variety associated with $A$, referred to as $s$-th Brauer field in \cite{Roquette+1964+207+226}. We note that the field $K(\underline{y})$ with the $K$-semilinear action induced by \(\sigma_\alpha\) is isomorphic to the function field of \(SB(A) \otimes_F K\) with the inherent \(K\)-semilinear action. 
\subsection{Geometric proof of the conjecture for cyclic algebras}
In this subsection, we present a geometric interpretation of Roquette’s proof of Amitsur’s conjecture for cyclic algebras. In particular, we construct an explicit birational map between the corresponding Severi–Brauer varieties, with an eye toward future applications.
\begin{theorem}
    Let $A$ be a cyclic algebra over $F$ with degree $s$. Then for any integer $l$ coprime to the degree of $A$, we have $SB([A]) \sim SB([A^{\otimes \ell}])$.
\end{theorem}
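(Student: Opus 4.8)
The plan is to carry out the entire argument on the level of the base-changed projective spaces equipped with their semilinear descent data, and only at the end invoke Galois descent. Write $A = (K\vert F,\sigma,\gamma)$ for a generator $\sigma$ of $G$ and $\gamma\in F^\times$, so that $\alpha$ is the cocycle displayed in Section~\ref{background}. Under the identification of Section~\ref{Brauer fields}, $SB(A)\otimes_F K \cong \mathbb{P}^{s-1}_K$ carries the semilinear action $\sigma_\alpha$ given by $x_i\mapsto x_{i+1}$ for $0\le i\le s-2$ and $x_{s-1}\mapsto \gamma x_0$ (and $\lambda\mapsto\sigma(\lambda)$ on $K$). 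Since $[A^{\otimes\ell}]=\ell[A]$ is represented by the degree-$s$ cyclic algebra $(K\vert F,\sigma,\gamma^\ell)$, the variety $SB([A^{\otimes\ell}])$ is the form of $\mathbb{P}^{s-1}$ carrying the analogous action with $\gamma$ replaced by $\gamma^\ell$. Thus it suffices to produce a $G$-equivariant birational self-map of $\mathbb{P}^{s-1}_K$ intertwining these two semilinear actions.

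First I would pass to coordinates in which the descent datum becomes a clean cyclic shift. Setting $u_i:=x_i/x_{i-1}$ for $1\le i\le s-1$ and $u_0:=\gamma x_0/x_{s-1}$, a direct check (using $\sigma(\gamma)=\gamma$) gives $\sigma_\alpha(u_i)=u_{(i+1)\bmod s}$ for every $i$, while the $u_i$ satisfy the single relation $u_0u_1\cdots u_{s-1}=\gamma$; moreover $K(u_1,\dots,u_{s-1})=K(\underline{y})$. In these coordinates the descent datum for $A$ is the cyclic shift on the hypersurface $\{\prod_i u_i=\gamma\}$, and that for $A^{\otimes\ell}$ is the cyclic shift on $\{\prod_i v_i=\gamma^\ell\}$, two $(s-1)$-dimensional tori.

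I would then define the comparison map by a circulant monomial substitution. Fixing integers $c_0,\dots,c_{s-1}$ with $\sum_j c_j=\ell$ obtained by reducing $p(T)=1+T+\cdots+T^{\ell-1}$ modulo $T^s-1$, set $v_i:=\prod_{j=0}^{s-1} u_{(i+j)\bmod s}^{\,c_j}$. The cyclic symmetry of this definition makes equivariance automatic: $\sigma_\alpha(v_i)=\prod_j u_{(i+j+1)}^{c_j}=v_{i+1}$, which is exactly the target shift. Taking the product over all $i$ yields $\prod_i v_i=(\prod_k u_k)^{\sum_j c_j}=\gamma^\ell$, so the map carries the $\gamma$-level set to the $\gamma^\ell$-level set as required.

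The crux, and the point I expect to require genuine work, is birationality, i.e. invertibility of this monomial map on the two tori. This amounts to showing that the circulant exponent matrix $C=\sum_j c_j P^j$, with $P$ the cyclic shift, is unimodular on the quotient lattice $\mathbb{Z}^s/\langle(1,\dots,1)\rangle$; its determinant there is $\prod_{\zeta^s=1,\,\zeta\ne1}p(\zeta)=\prod_{\zeta\ne1}\frac{\zeta^\ell-1}{\zeta-1}$. Here coprimality enters decisively: since $\gcd(\ell,s)=1$, the map $\zeta\mapsto\zeta^\ell$ permutes the nontrivial $s$-th roots of unity, so $\prod_{\zeta\ne1}(\zeta^\ell-1)=\prod_{\zeta\ne1}(\zeta-1)$ and the determinant equals $1$ (equivalently, each factor is a cyclotomic unit). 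Hence the substitution is a $K$-isomorphism of the two tori, giving a $G$-equivariant birational isomorphism $\mathbb{P}^{s-1}_K\dashrightarrow\mathbb{P}^{s-1}_K$. Finally, since this map commutes with both semilinear $G$-actions, Galois descent (Section~\ref{background}) produces an isomorphism of the fixed fields $K(\underline{u})^G\cong K(\underline{v})^G$, i.e. of the function fields of $SB(A)$ and $SB([A^{\otimes\ell}])$ over $F$, which is the asserted birational equivalence. The equivariance and the descent step are formal; only the unimodularity computation carries content.
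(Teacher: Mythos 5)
Your proof is correct, and underneath the change of variables it is the \emph{same} comparison map as the paper's: since $u_i = x_i/x_{i-1}$ (with $u_0=\gamma x_0/x_{s-1}$), your monomial substitution $v_i=\prod_j u_{(i+j)\bmod s}^{\,c_j}$ (where for $\ell<s$ the $c_j$ are just $1$ for $j<\ell$ and $0$ otherwise) telescopes to ratios of the paper's coordinates $\beta_i\, a_i a_{i+1}\cdots a_{i+\ell-1}$, with your wraparound $\gamma$-powers playing the role of the undetermined scalars $\beta_i$. The packaging, however, differs in two useful ways. First, passing to the $u_i$ turns the cocycle into the single relation $u_0u_1\cdots u_{s-1}=\gamma$ with the pure cyclic shift as descent datum, so equivariance is automatic from the cyclic symmetry of the exponents; the paper instead works in homogeneous coordinates and must solve a linear system for the $\beta_i$, arriving at $k^s=1$ and choosing $k=1$ — a step your normalization renders unnecessary. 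Second, and more substantively, you actually prove birationality, whereas the paper only asserts that $\Theta_1$ ``is invertible upon restriction to an affine open set'' with no argument. That assertion is exactly where coprimality must enter, and it fails without it: for $s=\ell=2$ the map $[a_0:a_1]\mapsto[a_0a_1:a_1a_0]$ is constant. Your circulant computation — the induced determinant on $\mathbb{Z}^s/\langle(1,\dots,1)\rangle$ equals $\prod_{\zeta^s=1,\,\zeta\neq 1}(\zeta^\ell-1)/(\zeta-1)=1$ because $\gcd(\ell,s)=1$ makes $\zeta\mapsto\zeta^\ell$ a permutation of the nontrivial $s$-th roots of unity — correctly isolates the role of the hypothesis and fills the one genuine gap in the paper's verification. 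The final descent step (a $G$-equivariant isomorphism of $K(\underline{u})$ onto itself carrying one semilinear action to the other induces an isomorphism of the fixed fields, i.e.\ of the function fields of $SB([A])$ and $SB([A^{\otimes\ell}])$) coincides with the paper's. In short: same map and same descent framework, but your torus model trades the paper's explicit scalar bookkeeping for automatic equivariance, and supplies the lattice-theoretic proof of invertibility that the paper leaves implicit.
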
   
\begin{proof}
  We can assume $A$ is a division algebra. Let $K|F$ be a maximal cyclic splitting field of $A$ with Galois group \(G\) generated by \(\sigma\). Then $A$ is isomorphic to the cyclic algebra \( (K,G,\alpha)\) for some \(\alpha \in H^2(G,K^\times)\). Further $\alpha$ can be chosen in such a way that \(\alpha(\sigma^i, \sigma^j) = 
\begin{cases}
1, & i+j < n, \\
\gamma, & i+j \geq n
\end{cases}
\) for some $\gamma \in F^{\times}$. For brevity, we write \(\alpha_{i,j}\) instead of \(\alpha(\sigma^i,\sigma^j)\).
Let \(\ell\) be an integer such that \(\ell<s\) and \((\ell,s)=1\). Then \(A^{\otimes \ell}\) is Brauer equivalent to the cyclic algebra \((K,G,\alpha^\ell)\) in \( \Br \, (F)\). For each \(0 \leq i \leq s-1\), consider morphisms
\begin{align*}
    \varphi_i, \psi_i : \text{Proj }(K[x_0,x_1,\dots,x_{s-1}])  &\to \text{Proj} (K[x_0,x_1,\dots,x_{s-1}]) \, \text{given by} \, \\
    \varphi_i(x_j) = x_{(j + i) \bmod s} \cdot \alpha_{i,j}
&\quad  \quad
\psi_i(x_j) = x_{(j + i) \bmod s} \cdot \alpha^{\ell}_{i,j} \,
\text{and} \, \\
\varphi_i(\lambda)=&\psi_i(\lambda)=\sigma^i(\lambda) \, \text{for} \, \lambda \in K.
\end{align*}Note that \(\varphi_i\) (resp.\(\psi_i\)) is the composition of \(\varphi_1\) (resp.\(\psi_1\) )\(i\) times for each $1 \leq i \leq n$. These $\varphi_i$ (resp. $\psi_i$) represents the $K$-semilinear Galois action on \(SB([A])\otimes_F K \cong \P_K^{s-1}\) and \(SB([A^{\ell}])\otimes_F K \cong \P_K^{s-1}\) respectively, as explained in subsection \ref{Brauer fields}. By the theory of Galois descent, it is enough to prove the existence of a birational map $\Theta$ that makes the following diagram commute.

\[
\begin{tikzcd}
\mathbb{P}_K^{s-1} \arrow[r, "\Theta"] \arrow[d, "\varphi_1"'] & \mathbb{P}_K^{s-1} \arrow[d, "\psi_1"] \\
\mathbb{P}_K^{s-1} \arrow[r, "\Theta"] & \mathbb{P}_K^{s-1}
\end{tikzcd}
\]
We define $\Theta$ as
\[
[a_0, a_1, \dots, a_{s-1}] \mapsto [\beta_0 a_0a_1\dots a_{\ell -1}, \beta_1 a_1 a_2 \dots a_{\ell} , \dots, \beta_{s-1} a_{s-1}a_0 \dots a_{s+\ell-1}]
\] for some scalars $\beta_0, \dots, \beta_{s-1} \in F^{\times}$. Note that $\Theta$ is the composition of the following two maps, \[
[a_0, a_1, \dots, a_{s-1}] \xmapsto{\Theta_1} [a_0a_1\dots a_{\ell -1},a_1 a_2 \dots a_{\ell} ,\dots, a_{s-1}a_0 \dots a_{s+\ell-1}] 
\] and \[
[a_0, a_1, \dots, a_{s-1}] \xmapsto{\Theta_2} [\beta_0 a_0 , \beta_1 a_1, \dots, \beta_{s-1} a_{s-1}].
\]
The map $\Theta_1$ is invertible upon restriction to an affine open set, say $(x_0 \neq 0)$, and is therefore birational. Moreover, since $\Theta_2$ corresponds to multiplication by scalars, it is an isomorphism. Hence, we conclude that $\Theta$ is a birational map. It remains to ensure the commutativity of the diagram. 
 Note that 
 \[
 \Theta \circ \varphi_1([a_0, a_1, \dots, a_{s-1}]) = [\beta_0 \prod_{i=0}^{\ell-1}  \sigma_{\alpha}^{i}(a_0) , \beta_1 \prod_{i=0}^{\ell-1}  \sigma_{\alpha}^{i}(a_1), \dots, \beta_{s-1} \prod_{i=0}^{\ell-1}  \sigma_{\alpha}^{i}(a_{s-1})].
 \]

Similarly, 
 \[
 \psi_1 \circ \Theta([a_0, a_1, \dots, a_{s-1}]) = [\beta_0 \prod_{i=0}^{\ell-1}  \sigma_{\alpha^{\ell}}^{i}(a_0) , \beta_1 \prod_{i=0}^{\ell-1}  \sigma_{\alpha^{\ell}}^{i}(a_1), \dots, \beta_{s-1} \prod_{i=0}^{\ell-1}  \sigma_{\alpha^{\ell}}^{i}(a_{s-1})].
 \]
Equating the coordinates, we get the following system of equations with coefficients in $F$:
\begin{align*}
    \beta_i \alpha_{i,0}^{\ell} & = \beta_0 \alpha_{i,0}\alpha_{i,1}\dots \alpha_{i,\ell-1}, \\
    \beta_{i+1}\alpha_{i,1}^{\ell} & = \beta_1 \alpha_{i,1}\alpha_{i,2}\dots \alpha_{i,\ell}, \\
    \vdots \\
   \beta_{i+s-1} \alpha_{i,s-1}^{\ell} & = \beta_{s-1} \alpha_{i,s-1}\alpha_{i,s}\dots \alpha_{i,s-1+\ell-1}.
\end{align*}
Rewriting in terms of quotients after cancelations, we need to prove the existence of $\beta_j$ that satisfies 
\[
\frac{\beta_0 \alpha_{1,1} \dots \alpha_{1,\ell-1}}{\beta_1 \alpha_{1,0}^{\ell-1}} = \frac{\beta_1 \alpha_{1,2} \dots \alpha_{1,\ell}}{\beta_2 \alpha_{1,1}^{\ell-1}}
= \dots = \frac{\beta_{s-1} \alpha_{1, 0}\dots \alpha_{1,\ell-2}}{\beta_0 \alpha_{1, s-1}^{\ell-1}} =  k 
\] for some constant \(k \in F^{\times}\). From the definition of  \(\alpha_{i,j}\), \(\alpha_{1,j} =\gamma\) if and only if \(j=s-1\). Solving for \(k\), we obtain 
\(\beta_{s-1} = k^s \, \beta_{s-1} \, \gamma^{\ell-1} \, \gamma^{-m}\), where $m$ denotes the number of occurrences of $\alpha_{1,s-1}$ in the numerator of the fractional expressions defining \(k\). One can observe that $m = \ell - 1$. Hence, $\beta_{s-1}=k^{s}\gamma^{\ell-1}\gamma^{-(\ell-1)}\beta_{s-1}$. Assuming $\beta_{s-1}\neq 0$, we have $k^{s}=1$. Let $k=1$. Then choosing a scalar $\beta_0 \in F^{\times}$ ensures that the scalars $\beta_i$ belong to $F^{\times}$ for each $2 \leq i \leq s-1$.  
  
\end{proof}


\textbf{Acknowledgements:} The author would like to thank her supervisor, Amit Hogadi, for many helpful discussions that shaped this work and his constant encouragement. She also thanks Vivek Sadhu for his suggestions on the manuscript. The author was partially supported by the DST-INSPIRE Fellowship (Reg. No. IF210208).
\bibliographystyle{plain}
\bibliography{reference}

\end{document}